\theoremstyle{plain}
\newtheorem{theorem}{Theorem}[section]
\numberwithin{equation}{section}
\newcommand{\ra}{\rightarrow}
\newcommand{\st}{\overline{\otimes}}
\newcommand{\E}{\mathbb{E}}
\begin{document}

\title {`Twisted Duality' for Clifford Algebras}

\date{}

\author[P.L. Robinson]{P.L. Robinson}

\address{Department of Mathematics \\ University of Florida \\ Gainesville FL 32611  USA }

\email[]{paulr@ufl.edu}

\subjclass{} \keywords{}

\begin{abstract}

Let $V$ be a real inner product space and $C(V_{\mathbb{C}})$ the Clifford algebra of its complexification $V_{\mathbb{C}}$. We present several proofs of the fact that if $W$ is a subspace of $V_{\mathbb{C}}$ then  $C(W^{\perp})$ coincides with the supercommutant of $C(W)$ in $C(V_{\mathbb{C}})$. 

\end{abstract}

\maketitle

\bigbreak

\section{Introduction}

Let $V$ be a real vector space upon which $(\cdot | \cdot )$ is a positive-definite inner product. This inner product extends to the complexification $V_{\mathbb{C}}$ to define both a nonsingular symmetric complex-bilinear form (for which we use the same symbol) and a Hermitian inner product $\langle \cdot | \cdot \rangle$; these forms are related by the identity $\langle x | y \rangle = ( \overline{x} | y)$ whenever $x, y \in V_{\mathbb{C}}$ where the overline signifies complex conjugation pointwise fixing $V \subseteq V_{\mathbb{C}}$. We shall denote the $(\cdot | \cdot )$-orthogonal space to the subspace $W \leqslant V_{\mathbb{C}}$ by 
\[W^{\perp} = \{ z \in V_{\mathbb{C}} | (\forall w \in W) \; (w | z ) = 0 \}
\]
and use the same notation for subspaces of $V$ itself. Note that if $W \leqslant V_{\mathbb{C}}$ then $W^{\perp} \cap W$ need not be zero (indeed,  $W^{\perp} = W$ is possible) whereas if $Z \leqslant V$ then $Z^{\perp} \cap Z = 0$. 

\medbreak

Recall that the Clifford algebra of $V$ is a unital associative real algebra $C(V)$ with a preferred linear embedding $V \rightarrow C(V)$ that satisfies the Clifford relation 
\[ (\forall v \in V) \; \; v v = (v | v) \bf{1}
\]
and has the following universal mapping property (UMP): if $V \rightarrow A$ linearly embeds $V$ in the unital associative algebra $A$ and satisfies the Clifford relation, then there exists a unique algebra homomorphism $C(V) \rightarrow A$ that restricts to the identity on $V$. The Clifford algebra of a complex vector space equipped with a symmetric bilinear form is the unital associative complex algebra defined similarly; note that $C(V_{\mathbb{C}})$ may be identified with the complexification $C(V)_{\mathbb{C}}$. 

\medbreak 

Clifford algebras naturally carry important structural maps. First among these, the linear operator $- {\rm Id}$ on $V$ extends uniquely (via the UMP) to an algebra automorphism of $C(V)$ which we denote by $\gamma$ and call the grading automorphism; its fixed points constitute the even Clifford algebra $C (V)_+$ while the fixed points of $- \gamma$ constitute the odd subspace $C(V)_-$. In like fashion, the complex Clifford algebra $C(V_{\mathbb{C}})$ is similarly graded by an analogous grading automorphism. The importance of the grading automorphism $\gamma$ is that it makes $C(V_{\mathbb{C}})$ into a superalgebra, which significantly clarifies its structure; in particular, we remark without proof that  $C(V_{\mathbb{C}})$ is simple as a superalgebra. Next in importance, the complex Clifford algebra $C(V_{\mathbb{C}})$ carries a unique involution $^*$ that restricts to $V_{\mathbb{C}} \subseteq C(V_{\mathbb{C}})$ as complex conjugation pointwise fixing $V$: to see this, apply the UMP to the embedding of $V_{\mathbb{C}}$ in the algebra obtained from $C(V_{\mathbb{C}})$ by conjugating the linear structure and reversing the product; thus, $C(V_{\mathbb{C}})$ is naturally a unital associative $^*$-algebra. Of course, this involution restricts to $C(V) \subseteq C(V_{\mathbb{C}})$ as a period-two antiautomorphism. Finally,  $C(V_{\mathbb{C}})$ carries a unique trace: that is, a $\gamma$-invariant linear functional $\tau :  C(V_{\mathbb{C}}) \rightarrow \mathbb{C}$ such that $\tau ({\bf 1}) = 1$ and such that $\tau (a b) = \tau (b a)$ whenever $a, b \in  C(V_{\mathbb{C}})$. A Hermitian inner product $\langle \cdot | \cdot \rangle_{\tau}$ is then defined on $C(V_{\mathbb{C}})$ by the rule 
\[ (\forall a, b \in C(V_{\mathbb{C}})) \; \; \langle a | b\rangle_{\tau} = \tau (a^* b). 
\]

\medbreak 

Now, let $W \leqslant V_{\mathbb{C}}$ be a complex subspace of the complexification. Its Clifford algebra is naturally graded, thus: 
\[ C(W) = C(W)_+ \oplus C(W)_-. 
\]
According to the Koszul-Quillen rule of signs, the {\it supercommutant} of $C(W)$ in $C(V_{\mathbb{C}})$ is the subalgebra 
\[C(W)' = C(W)'_+ \oplus C(W)'_-
\]
with even part 
\[ C(W)'_+ = \{ a \in C(V_{\mathbb{C}})_+ | (\forall b \in C(W) ) \; b a = a b\}
\]
and odd part 
\[ C(W)'_- = \{ a \in C(V_{\mathbb{C}})_- | (\forall b \in C(W) ) \; b a =a \gamma(b)\}. 
\]
\medbreak
\noindent
Our aim in this paper is to collect together several proofs of the following theorem, which is an abstract formulation of `twisted duality'. 

\begin{theorem} 
If $W \leqslant V_{\mathbb{C}}$ is a subspace of the complexification, then its Clifford algebra has supercommutant in  $C(V_{\mathbb{C}})$ given by 
\[ C(W)' = C(W^{\perp}). 
\]
\end{theorem}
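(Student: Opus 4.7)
\noindent\emph{Proof plan.} My strategy is to reformulate the supercommutant condition as the vanishing of a family of odd superderivations, and then perform a dimension count via the standard filtration on $C(V_{\mathbb{C}})$.

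For each $w \in V_{\mathbb{C}}$ define $\iota_w : C(V_{\mathbb{C}}) \to C(V_{\mathbb{C}})$ by $\iota_w(a) = \tfrac{1}{2}\bigl(wa - \gamma(a)w\bigr)$. A direct check using the Clifford relation $vv' + v'v = 2(v|v')\mathbf{1}$ shows that $\iota_w$ is an odd superderivation of $C(V_{\mathbb{C}})$, linear in $w$, satisfying $\iota_w(v) = (w|v)\mathbf{1}$ for $v \in V_{\mathbb{C}}$. The utility of $\iota_w$ is that when $a$ is even the equation $wa = aw$ reads $\iota_w(a) = 0$, and when $a$ is odd the equation $wa = \gamma(w)a = -aw$, i.e.\ $wa+aw = 0$, again reads $\iota_w(a) = 0$. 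Since $C(W)$ is generated as an algebra by $W$, the theorem is reduced to the identification
\[ C(W^{\perp}) = K := \{ a \in C(V_{\mathbb{C}}) : \iota_w(a) = 0 \text{ for every } w \in W\}. \]

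The inclusion $C(W^{\perp}) \subseteq K$ is immediate from the super-Leibniz rule: on a product $z_1\cdots z_k$ with each $z_i \in W^{\perp}$, iterating $\iota_w(ab) = \iota_w(a)b + \gamma(a)\iota_w(b)$ and invoking $\iota_w(z_i) = (w|z_i) = 0$ for $w \in W$ forces $\iota_w(z_1 \cdots z_k) = 0$, so $\iota_w$ annihilates the subalgebra generated by $W^{\perp}$.

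For the reverse inclusion I would pass to the associated graded. With respect to the standard filtration on $C(V_{\mathbb{C}})$ by the number of Clifford generators, the associated graded algebra is the exterior algebra $\Lambda V_{\mathbb{C}}$; each $\iota_w$ drops filtration degree by one and its principal symbol on $\Lambda V_{\mathbb{C}}$ is the usual contraction by the linear functional $(w|\cdot) \in V_{\mathbb{C}}^{*}$. Choosing a vector-space basis of $V_{\mathbb{C}}$ adapted to $W^{\perp}$, and using that the bilinear form $(\cdot|\cdot)$ is globally nondegenerate on $V_{\mathbb{C}}$ (so $(W^{\perp})^{\perp} = W$ and the functionals $(w|\cdot)$ for $w \in W$ sweep out the annihilator of $W^{\perp}$), the classical exterior-algebra duality identifies $\{\xi \in \Lambda V_{\mathbb{C}} : \iota_w(\xi) = 0 \text{ for every } w \in W\}$ with $\Lambda W^{\perp}$. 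Consequently $\mathrm{gr}(K) \subseteq \Lambda W^{\perp}$, giving $\dim K \leq 2^{\dim W^{\perp}} = \dim C(W^{\perp})$; combined with the easy inclusion this forces equality.

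I expect the main obstacle to be precisely this dimension bound on $K$. The subtle point is that the form may well be degenerate on $W$ itself (indeed, one may have $W \subseteq W^{\perp}$), so $W$ need not admit an orthogonal basis inside $V_{\mathbb{C}}$; the global nondegeneracy on $V_{\mathbb{C}}$ is exactly the hypothesis that rescues the duality argument and makes the kernel computation on $\Lambda V_{\mathbb{C}}$ come out cleanly to $\Lambda W^{\perp}$.
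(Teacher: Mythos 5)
Your route is genuinely different from the paper's and, in finite dimensions, it is sound: recasting the supercommutant condition as the joint kernel of the odd derivations $\iota_w(a)=\tfrac12(wa-\gamma(a)w)$, passing to the associated graded $\Lambda V_{\mathbb{C}}$, and using that the joint kernel of the contractions by $(w|\cdot)$, $w\in W$, is $\Lambda(W^{\perp})$ (this last needs only $\{u:(w|u)=0\ \forall w\in W\}=W^{\perp}$, i.e.\ the definition of $W^{\perp}$, so your appeal to $(W^{\perp})^{\perp}=W$ is unnecessary -- fortunately, since that identity is false in infinite dimensions, e.g.\ $W=Z_{\mathbb{C}}$ for a non-closed subspace $Z$ of a real Hilbert space $V$). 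The genuine gap is the final step: the theorem carries no dimension hypothesis, and the paper stresses that it holds in infinite dimensions, whereas your conclusion ``$\dim K\leq 2^{\dim W^{\perp}}=\dim C(W^{\perp})$, hence equality'' is a literal dimension count that is meaningless once $W^{\perp}$ (or $V$) is infinite-dimensional. As written, your argument proves only the finite-dimensional case.

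The gap is repairable within your framework, in either of two standard ways. (i) Localize: given $a\in K$, choose $N\in\mathcal{F}(V_{\mathbb{C}})$ with $a\in C(N)$; by the Leibniz rule $\iota_w$ preserves the subalgebra $C(N)$ and acts there as the derivation attached to the restricted functional $(w|\cdot)|_N$, whose joint annihilator in $N$ is $N\cap W^{\perp}$; your finite-dimensional count inside $C(N)$ then gives $a\in C(N\cap W^{\perp})\subseteq C(W^{\perp})$. (ii) Avoid counting altogether by downward induction on filtration degree: if $a\in K$ has degree $d$, its symbol lies in $\Lambda^{d}W^{\perp}$, so subtracting an element of $C(W^{\perp})$ (a product of vectors of $W^{\perp}$ realizing that symbol, or its antisymmetrized quantization) leaves an element of $K$ of strictly smaller degree. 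For comparison, the paper instead proves the one-dimensional case $W=\mathbb{C}w$ via the explicit decomposition $C(V_{\mathbb{C}})=C(w^{\perp})\oplus\overline{w}\,C(w^{\perp})$ (positivity guarantees $\overline{w}\notin w^{\perp}$), and then reduces general $W$ to lines through the intersection theorem $\bigcap_{\lambda}C(Z_{\lambda})=C(\bigcap_{\lambda}Z_{\lambda})$; that reduction is exactly what handles infinite dimensions there, and your fix (i) plays the analogous role here.
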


Notice that we have made no assumption regarding dimension; this theorem is valid in both finite and infinite dimensions. Having said this, we will find it convenient to work up through finite dimensions. Explicitly, let $\mathcal{F} (V)$ denote the set of finite-dimensional subspaces of $V$ directed by inclusion: then 
\[ C(V) = \bigcup_{M \in \mathcal{F} (V)} C(M);
\]
likewise
\[ C(V_{\mathbb{C}}) = \bigcup_{N \in \mathcal{F} (V_{\mathbb{C}})} C(N).
\]
\medbreak 

In Section 1 we shall present a proof of Theorem 0.1 in full generality; indeed, our proof turns out to be valid over arbitrary scalar fields. In Section 2 we offer a rather different proof making use of a (super) tensor product decomposition, which is applicable to orthogonal (direct) decompositions of $V_{\mathbb{C}}$. In Section 3 we offer yet another proof that develops and makes use of conditional expectations; this proof applies to orthogonal decompositions of $V$ itself. 

\medbreak

`Twisted duality' was introduced in [2] and was developed in [5]; abstract `twisted duality' was studied in [3] (based on a 1982 thesis) and more recently in [1]. Our discussion of conditional expectations in Section 3 extends and simplifies the approach in [4], which serves as a convenient reference for the theory of Clifford algebras. We remark that among the clarifications that arise from viewing the Clifford algebra as a {\it superalgebra} is clarification of `twisted duality' itself: the Koszul-Quillen rule obviates the need for (or perhaps subsumes) the Klein transformation in terms of which `twisted duality' is usually formulated. 

\section{First Proof: General Case}

Fix a complex subspace $W \leqslant V_{\mathbb{C}}$ and consider the supercommutant
\[  C(W)' = C(W)'_+ \oplus C(W)'_-. 
\]
Its even part comprises precisely all those $a \in C(V_{\mathbb{C}})_+$ such that 
\[ (\forall b \in C(W)) \; b a = a b
\]
equivalently such that 
\[ (\forall w \in W) \; w a = a w = \gamma(a) w,
\]
while its odd part comprises precisely all those $a \in C(V_{\mathbb{C}})_-$ such that 
\[ (\forall b \in C(W)) \; b a = a \gamma(b)
\]
equivalently such that 
\[ (\forall w \in W) \; w a = a (-w) = \gamma(a) w. 
\]
Thus 
\[ C(W)' = \{ a \in C(V_{\mathbb{C}}) | (\forall w \in W) \; wa = \gamma(a) w \} . 
\]

\medbreak 

Notice at once that the inclusion
\[ C(W^{\perp}) \subseteq C(W)'
\]
follows immediately from the Clifford relations; accordingly, we shall only need to establish the reverse inclusion 
\[ C(W)' \subseteq C(W^{\perp}). 
\]

\medbreak 

We begin by considering first the case in which $W \leqslant V_{\mathbb{C}}$ is one-dimensional: say $W = \mathbb{C} w$ for some nonzero $w \in W$. Note that $\overline{w} \notin W^{\perp}$: in fact, $( \overline{w} | w) = \langle w | w \rangle > 0$; it follows that $V_{\mathbb{C}} = W^{\perp} \oplus  \mathbb{C} \overline{w}$ is the direct sum of the complex  hyperplane $W^{\perp}$ and the complex line $\mathbb{C} \overline{w}$.

\begin{theorem} 
\[C(V_{\mathbb{C}}) = C(W^{\perp}) \oplus \overline{w} C(W^{\perp}). 
\]
\end{theorem}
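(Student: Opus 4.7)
The plan is to establish two things: (i) the sum $C(W^{\perp}) + \overline{w}\,C(W^{\perp})$ exhausts $C(V_{\mathbb{C}})$, and (ii) this sum is direct.

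For (i), the vector-space decomposition $V_{\mathbb{C}} = W^{\perp} \oplus \mathbb{C}\overline{w}$ (noted just before the statement) means $C(V_{\mathbb{C}})$ is generated as an algebra by $W^{\perp} \cup \{\overline{w}\}$. Using the polarized Clifford relation
\[
\overline{w}\,z + z\,\overline{w} = 2(\overline{w}\,|\,z)\,\mathbf{1} \quad (z \in W^{\perp})
\]
to commute every occurrence of $\overline{w}$ to the left, together with $\overline{w}^{\,2} = (\overline{w}\,|\,\overline{w})\,\mathbf{1}$ to collapse repeated occurrences, any monomial in these generators reduces to the form $a + \overline{w}\,b$ with $a,b \in C(W^{\perp})$.

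For (ii), I would first dispose of the case in which $V_{\mathbb{C}}$ is finite-dimensional by a dimension count, using the standard fact that $\dim C(U) = 2^{\dim U}$ for any finite-dimensional $U$ regardless of the bilinear form. The decomposition $V_{\mathbb{C}} = W^{\perp} \oplus \mathbb{C}\overline{w}$ then gives $\dim C(V_{\mathbb{C}}) = 2 \dim C(W^{\perp})$, which, combined with the surjection $(a,b) \mapsto a + \overline{w}\,b$ supplied by (i), forces bijectivity. For the general case, suppose $a + \overline{w}\,b = 0$ with $a,b \in C(W^{\perp})$. Using the colimit presentation $C(V_{\mathbb{C}}) = \bigcup_{N \in \mathcal{F}(V_{\mathbb{C}})} C(N)$ highlighted in the introduction, choose a finite-dimensional $N \leqslant V_{\mathbb{C}}$ containing $\overline{w}$ and large enough that $a,b \in C(N \cap W^{\perp})$. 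Since $(w\,|\,\overline{w}) = \langle w\,|\,w\rangle > 0$, the linear functional $z \mapsto (w\,|\,z)$ is nonzero on $N$, whence $N \cap W^{\perp}$ has codimension one in $N$ and $N = (N \cap W^{\perp}) \oplus \mathbb{C}\overline{w}$. Applying the finite-dimensional case with $N$ in place of $V_{\mathbb{C}}$ forces $a = b = 0$.

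I expect the principal subtlety to lie in the dimension step: one needs $\dim C(U) = 2^{\dim U}$ even when the symmetric bilinear form restricted to $U$ is degenerate (which can happen here, since $W$ may be isotropic). This is handled by the familiar fact that, as a vector space, $C(U)$ is canonically isomorphic to the exterior algebra $\Lambda U$, so its dimension is insensitive to the form. With that in hand the argument is essentially a Poincaré--Birkhoff--Witt-style normal form combined with the directed-union structure already recorded in the introduction.
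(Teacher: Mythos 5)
Your proposal is correct, and its two halves compare differently with the paper. The spanning step is essentially the paper's own argument: the paper runs the same rewriting as an induction on the length of a Clifford product of vectors, decomposing the leading vector as $u + \lambda\overline{w}$ with $u \in W^{\perp}$ and using $\overline{w}\,\overline{w} = \overline{(w|w)}\,\mathbf{1}$ and $u\overline{w} + \overline{w}u = 2\langle w|u\rangle\mathbf{1}$, which is your ``commute $\overline{w}$ to the left'' normal form. For directness you take a genuinely different route. The paper needs no dimension count: if $a + \overline{w}b = 0$ with $a, b \in C(W^{\perp})$, it plays the easy inclusion $C(W^{\perp}) \subseteq C(W)'$ against this hypothesis to get $w\overline{w}b = -wa = -\gamma(a)w = \gamma(\overline{w}b)w = -\overline{w}\gamma(b)w = -\overline{w}wb$, whence $2\langle w|w\rangle b = (\overline{w}w + w\overline{w})b = 0$, so $b = 0$ and then $a = 0$; this works directly in infinite dimensions, uses nothing beyond the Clifford relations and $\langle w|w\rangle > 0$, and (as the paper remarks) survives over arbitrary fields of characteristic other than two once $\overline{w}$ is replaced by any vector outside $W^{\perp}$. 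Your route instead rests on the external fact that $\dim C(U) = 2^{\dim U}$ irrespective of degeneracy of the form --- which, in the paper's conventions where $C(U)$ denotes the subalgebra of $C(V_{\mathbb{C}})$ generated by $U$, amounts to the standard PBW/exterior-algebra fact that the abstract Clifford algebra of a subspace embeds onto that subalgebra --- together with a directed-union reduction to a finite-dimensional $N = (N \cap W^{\perp}) \oplus \mathbb{C}\overline{w}$; note that your hyperplane $N \cap W^{\perp}$ is cut out by the functional $(w|\cdot)$ with $w$ possibly outside $N$, but your finite-dimensional count only needs the displayed splitting of $N$, which you verify, so the reduction is sound. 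The trade-off: the paper's two-line supercommutation trick is entirely self-contained and avoids the dimension fact altogether, while your argument imports a heavier (though standard) structural input in exchange for a conceptually transparent ``surjective plus equal dimensions implies bijective'' conclusion, and it too is insensitive to the ground field and to positivity beyond $\overline{w} \notin W^{\perp}$.
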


\begin{proof} 
To prove that $ C(V_{\mathbb{C}})$ is the sum of the two spaces on the right of the alleged equation, we show that each Clifford product $v_0 v_1 \cdots v_N \in  C(V_{\mathbb{C}})$ of vectors in $V_{\mathbb{C}}$ lies in that sum. The base step is clear: each $v_0 \in V_{\mathbb{C}}$ decomposes as 
\[ v_0 = u(v_0) + \lambda(v_0)  \overline{w} 
\]
for unique $u(v_0) \in W^{\perp}$ and $ \lambda(v_0)  \in \mathbb{C}$. For the inductive step, write 
\[ v_0 v_1 \cdots v_N = (u(v_0) + \lambda(v_0)  \overline{w})v_1 \cdots v_N
\] 
where (inductively) 
\[ v_1 \cdots v_N = a + \overline{w} b
\]
with $a, b \in C(W^{\perp})$. In the subsequent expansion, note that by the (linearized) Clifford relations, $ \overline{w}  \overline{w} = \overline{(w | w)}$  and $u(v_0) \overline{w} + \overline{w} u(v_0) = 2 (\overline{w} | u(v_0) ) = 2 \langle w | u(v_0) \rangle$. It follows that 
\[ v_0 v_1 \cdots v_N = A + \overline{w} B
\] 
where 
\[ A = u(v_0) a + (\lambda(v_0)  \overline{(w | w)} + 2  \langle w | u(v_0) \rangle) b
\]
and 
\[ B = \lambda(v_0) a - u(v_0) b
\]
both lie in $C(W^{\perp})$. \par 
To prove that the sum is direct, let  $a, b \in C(W^{\perp})$ satisfy $a + \overline{w} b = 0$: then $w \overline{w} b = - w a = - \gamma(a) w = \gamma(\overline{w} b) w = - \overline{w} \gamma(b) w = - \overline{w} w b$ where the first and third equalities hold by assumption on $a$ and $b$, the second and fifth because $C(W^{\perp}) \subseteq C(W)'$ and the fourth because $\overline{w}$ is odd; thus $2\langle w | w \rangle b = (\overline{w} w + w \overline{w})b = 0$ and so $b = 0$. 
\end{proof}

Now we prove Theorem 0.1 in case $W = \mathbb{C} w$. Let $c \in C(W)'$: thus, $w c = \gamma(c) w$ . Use Theorem 1.1 to express $c$ uniquely as 
\[ c = a + \overline{w} b
\]
with $a, b \in C(W^{\perp})$. Now $w c = \gamma(c) w$ reads
\[ w a + w \overline{w} b = \gamma(a) w - \overline{w} \gamma(b) w = w a - \overline{w} w b
\]
so that $2 \langle w | w \rangle b =  (\overline{w} w + w \overline{w})b = 0$ and therefore $b = 0$; this places $c = a$ in $C(W^{\perp})$ as required. Having now established Theorem 1.1 in case $W = \mathbb{C} w$ we may express it in the form 
\[ C(w)' = C(w^{\perp}).
\]
\medbreak 

In order to complete the proof of Theorem 0.1 we now address the intersection of Clifford algebras, beginning with finite intersections. 

\begin{theorem} 
If $X$ and $Y$ are subspaces of $ V_{\mathbb{C}}$ then $C(X) \cap C(Y) = C(X \cap Y)$. 
\end{theorem}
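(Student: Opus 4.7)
The inclusion $C(X \cap Y) \subseteq C(X) \cap C(Y)$ is immediate from the functoriality of the Clifford construction: the embeddings $X \cap Y \hookrightarrow X \hookrightarrow V_{\mathbb{C}}$ and $X \cap Y \hookrightarrow Y \hookrightarrow V_{\mathbb{C}}$ give rise, via the UMP, to algebra homomorphisms $C(X \cap Y) \to C(X) \to C(V_{\mathbb{C}})$ and $C(X \cap Y) \to C(Y) \to C(V_{\mathbb{C}})$ that agree. So only the reverse inclusion $C(X) \cap C(Y) \subseteq C(X \cap Y)$ requires work.

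My plan is to reduce to a basis argument. Choose a basis $\mathcal{B}_0$ of $X \cap Y$, extend it to a basis $\mathcal{B}_0 \sqcup \mathcal{B}_1$ of $X$, then extend $\mathcal{B}_0$ separately to a basis $\mathcal{B}_0 \sqcup \mathcal{B}_2$ of $Y$ with $\mathcal{B}_2$ disjoint from $\mathcal{B}_1$; then $\mathcal{B}_0 \sqcup \mathcal{B}_1 \sqcup \mathcal{B}_2$ is a basis of $X + Y$, and we further extend to a basis $\mathcal{B}$ of $V_{\mathbb{C}}$. Impose a total order on $\mathcal{B}$ with the blocks $\mathcal{B}_0 < \mathcal{B}_1 < \mathcal{B}_2 < \mathcal{B} \setminus (\mathcal{B}_0 \sqcup \mathcal{B}_1 \sqcup \mathcal{B}_2)$.

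The pivotal (PBW-style) fact to be invoked is that the ordered products $e_{i_1} e_{i_2} \cdots e_{i_k}$ (with $i_1 < \cdots < i_k$ and each $e_{i_j} \in \mathcal{B}$, including the empty product $\mathbf{1}$) form a linear basis of $C(V_{\mathbb{C}})$. Granted this, the ordered products whose factors lie in $\mathcal{B}_0 \sqcup \mathcal{B}_1$ form a basis of $C(X)$, those whose factors lie in $\mathcal{B}_0 \sqcup \mathcal{B}_2$ form a basis of $C(Y)$, and the intersection is precisely the span of ordered products with factors in $(\mathcal{B}_0 \sqcup \mathcal{B}_1) \cap (\mathcal{B}_0 \sqcup \mathcal{B}_2) = \mathcal{B}_0$, i.e. $C(X \cap Y)$.

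The chief obstacle is thus justifying the basis statement, especially as the theorem must hold with no dimension restriction. Standard approaches are to exhibit the symmetrization isomorphism of graded vector spaces $\Lambda V_{\mathbb{C}} \xrightarrow{\sim} C(V_{\mathbb{C}})$, from which linear independence of the ordered products is immediate; or equivalently to observe that the natural filtration on $C(V_{\mathbb{C}})$ has associated graded algebra $\Lambda V_{\mathbb{C}}$. Either route carries through in arbitrary (including infinite) dimension, since all assertions reduce to statements about finitely many basis vectors at a time. Once this fact is in hand, the conclusion is extracted just by reading off supports in the chosen basis.
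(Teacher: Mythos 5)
Your proof is correct and in essence coincides with the paper's: both choose bases adapted to $X \cap Y$ and identify an element of $C(X)\cap C(Y)$ by reading off its coefficients with respect to the resulting ordered-monomial (PBW-type) basis. The only minor difference is that the paper first reduces to finite-dimensional subspaces via $C(X)=\bigcup_{M\in\mathcal{F}(X)}C(M)$ and uses a monomial basis of $C(X+Y)$ built from complements of $X\cap Y$, whereas you invoke the ordered-monomial basis of $C(V_{\mathbb{C}})$ directly in arbitrary dimension, justified by the standard filtration/exterior-algebra argument.
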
 

\begin{proof} 
Only the inclusion $C(X) \cap C(Y) \subseteq C(X \cap Y)$ need be checked. Let $c \in C(X) \cap C(Y)$ : choose $M \in \mathcal{F} (X)$ and $N \in \mathcal{F} (Y)$ so that $c \in C(M) \cap C(N)$; if we can prove that $c \in C(M \cap N)$ then we shall be done. Thus, we may and shall assume without loss that $X$ and $Y$ are finite-dimensional. Let $Z : = X \cap Y$: choose complements $X_Z$ and $Y_Z$ so that $X = Z \oplus X_Z$ and $Y = Z \oplus Y_Z$; choose bases $\{x_i \}$, $\{ y_j \}$, $\{ z_k \}$ for $X, Y, Z$ respectively. According to standard multi-index notation, if $K = (k_1, \dots, k_r)$ is a sequence of integers with $1 \leqslant k_1 < \dots < k_r \leqslant \dim Z$ then $z_K = z_{k_1} \cdots z_{k_r}$ denotes the Clifford product, while if $K$  is the empty sequence then $z_K = {\bf 1}$; interpret $x_I$ and $y_J$ for strictly increasing multi-indices $I$ and $J$ in like manner. Note that the products $x_I y_J z_K$ form a basis for $C(X + Y)$ while the products $z_K$ form a basis for $C(Z)$ and so on. In these terms, write 
\[ c = \sum_{I, J, K} \lambda_{IJK} x_I y_J z_K. 
\]
As $c$ lies in $C(X)$ it follows that if $J \neq \emptyset$ then $\lambda_{IJK} = 0$; as $c$ lies in $C(Y)$ it follows that  if $I \neq \emptyset$ then $\lambda_{IJK} = 0$. Thus 
\[  c = \sum_{K} \lambda_{\emptyset \emptyset K} x_{\emptyset} y_{\emptyset} z_K = \sum_K \lambda_{\emptyset \emptyset K} z_K \in C(Z).
\]
\end{proof}

By induction, we conclude that this result for pairwise intersections extends to finite intersections; it actually extends to arbitrary intersections, as follows. 

\begin{theorem} 
If $\{ Z_{\lambda} | \lambda \in \Lambda \}$ is any family of subspaces of $V_{\mathbb{C}}$ then 
\[ \cap_{\lambda} C(Z_{\lambda}) = C(\cap_{\lambda} Z_{\lambda}).
\]
\end{theorem}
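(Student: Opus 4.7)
The inclusion $C(\cap_\lambda Z_\lambda) \subseteq \cap_\lambda C(Z_\lambda)$ is immediate, so the task is to establish the reverse. My plan is to bootstrap from the finite intersection case (obtained by induction from Theorem 1.2, as the paper already notes) by exploiting the directed union $C(V_{\mathbb{C}}) = \bigcup_{N \in \mathcal{F}(V_{\mathbb{C}})} C(N)$ to localize the problem to a finite-dimensional ambient subspace.

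Given $c \in \cap_\lambda C(Z_\lambda)$, I would choose $N \in \mathcal{F}(V_{\mathbb{C}})$ with $c \in C(N)$. Then for each $\lambda$, Theorem 1.2 yields $c \in C(N) \cap C(Z_\lambda) = C(N \cap Z_\lambda)$. The key step is then to replace the family $\{N \cap Z_\lambda\}_\lambda$ of subspaces of $N$ by a finite subfamily with the same intersection. Setting $U := \cap_\lambda (N \cap Z_\lambda) = N \cap (\cap_\lambda Z_\lambda)$ and starting from $U_0 = N$, as long as $U_k \supsetneq U$ there must exist some $\lambda$ with $N \cap Z_\lambda \not\supseteq U_k$ (otherwise every $N \cap Z_\lambda$ would contain $U_k$, forcing $U \supseteq U_k$); intersecting $U_k$ with such an $N \cap Z_\lambda$ strictly drops the dimension, so the process terminates after finitely many steps $\lambda_1, \ldots, \lambda_n$ at $U_n = U$.

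The finite intersection version of Theorem 1.2 then gives $c \in \cap_{i=1}^n C(N \cap Z_{\lambda_i}) = C(U) = C(N \cap (\cap_\lambda Z_\lambda)) \subseteq C(\cap_\lambda Z_\lambda)$, as required. The only genuine obstacle is the reduction from an arbitrary intersection to a finite one, and this is dispatched by the descending chain condition on subspaces of a finite-dimensional space; every other step is routine given the machinery already in place.
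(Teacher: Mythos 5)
Your proof is correct and takes essentially the same route as the paper: both localize to a finite-dimensional $N \in \mathcal{F}(V_{\mathbb{C}})$ using Theorem 1.2 and the directed union $C(V_{\mathbb{C}}) = \bigcup_N C(N)$, and both then reduce the arbitrary intersection to a finite sub-intersection by a dimension argument (the paper picks a finite sub-intersection $Z_{F_0}$ of minimal dimension, which is the same descending-chain-condition idea as your explicit chain $U_0 \supsetneq U_1 \supsetneq \cdots$). No gaps.
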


\begin{proof} 
It is enough to show that both sides have the same intersection with $C(N)$ for each $N \in \mathcal{F} (V_{\mathbb{C}})$ on acount of Theorem 1.2 and the fact that these Clifford algebras have $C(V_{\mathbb{C}})$ as their union. Accordingly, we may and shall assume without loss that $V$ is finite-dimensional and need only establish the inclusion 
\[ \cap_{\lambda} C(Z_{\lambda}) \subseteq C(\cap_{\lambda} Z_{\lambda}).
\]
Write $\overline{\Lambda}$ for the collection comprising all finite subsets of $\Lambda$; when $F \in \overline{\Lambda}$ write $Z_F$ for the (finite!) intersection of $Z_{\lambda}$ as $\lambda$ runs over $F$. Let $F_0 \in \overline{\Lambda}$ be such that $Z_{F_0}$ has least dimension among the (finite-dimensional!) subspaces $\{ Z_F : F \in \overline{\Lambda} \}$. If $\lambda \in \Lambda$ then on the one hand $Z_{F_0 \cup \{ \lambda \}} = Z_{F_0} \cap Z_{\lambda} \subseteq Z_{F_0}$ and on the other hand $\dim Z_{F_0 \cup \{ \lambda \}} \geqslant \dim Z_{F_0}$ by minimality; thus $Z_{F_0} \cap Z_{\lambda} = Z_{F_0}$ and so $Z_{F_0} \subseteq  Z_{\lambda}$. This proves 
\[ \cap_{\lambda} Z_{\lambda} = Z_{F_0}. 
\]
Finally, Theorem 1.2 justifies the middle step in 
\[ \cap_{\lambda} C(Z_{\lambda}) \subseteq \cap_{\lambda \in F_0} C(Z_{\lambda}) = C(Z_{F_0}) = C(\cap_{\lambda} Z_{\lambda} ). 
\]
\end{proof}

We are now able to prove Theorem 0.1 in full generality: if $W \leqslant  V_{\mathbb{C}}$ then 
\[ C(W)' = \cap_{w \in W} C(w)' = \cap_{w \in W} C(w^{\perp})
\]
as noted after Theorem 1.1; now Theorem 1.3 yields 
\[ \cap_{w \in W} C(w^{\perp}) = C(\cap_{w \in W} w^{\perp}) = C(W^{\perp}).
\]

\medbreak 
We observe that this proof actually works for nonsingular symmetric bilinear forms over arbitrary scalar fields (of characteristic other than two) if in Theorem 1.1 the vector $\overline{w}$ is replaced by any vector not in $W^{\perp}$. 

\medbreak 

As a special case, we recover the following familiar fact. 

\begin{theorem} 
$C(V_{\mathbb{C}})$ has scalar supercentre. 
\end{theorem}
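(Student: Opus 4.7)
The plan is to obtain this as an immediate corollary of Theorem 0.1 by specializing $W$ to be the whole of $V_{\mathbb{C}}$. The supercentre of $C(V_{\mathbb{C}})$ is by definition the supercommutant of $C(V_{\mathbb{C}})$ in itself, which in the notation of the paper is $C(V_{\mathbb{C}})'$ with the choice $W = V_{\mathbb{C}}$.

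Applying Theorem 0.1 with $W = V_{\mathbb{C}}$ then gives
\[ C(V_{\mathbb{C}})' = C(V_{\mathbb{C}}^{\perp}). \]
The remaining step is to identify the right-hand side. Since the symmetric complex-bilinear form $(\cdot | \cdot)$ on $V_{\mathbb{C}}$ is nonsingular (it is the complexification of a positive-definite real inner product on $V$), any $z \in V_{\mathbb{C}}$ satisfying $(w | z) = 0$ for every $w \in V_{\mathbb{C}}$ must be zero. Hence $V_{\mathbb{C}}^{\perp} = \{ 0 \}$, and the Clifford algebra of the zero subspace is precisely the scalar subalgebra $\mathbb{C} \cdot \mathbf{1} \subseteq C(V_{\mathbb{C}})$.

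There is no real obstacle here; the substance of the result has already been carried out in establishing Theorem 0.1, and the only additional ingredient is the elementary observation that nondegeneracy of $(\cdot | \cdot)$ on $V_{\mathbb{C}}$ forces $V_{\mathbb{C}}^{\perp} = 0$. I would therefore present the argument as a one-line deduction: by Theorem 0.1 and nondegeneracy,
\[ C(V_{\mathbb{C}})' = C(V_{\mathbb{C}}^{\perp}) = C(\{0\}) = \mathbb{C} \cdot \mathbf{1}. \]
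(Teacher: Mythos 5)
Your proposal is correct and follows the paper's own argument exactly: the paper likewise deduces Theorem 1.4 by applying Theorem 0.1 with $W = V_{\mathbb{C}}$, so that $C(V_{\mathbb{C}})' = C(V_{\mathbb{C}}^{\perp}) = C(0) = \mathbb{C}\,\mathbf{1}$, the only extra ingredient being nondegeneracy of the bilinear form. No gap to report.
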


\begin{proof} 
Simply note that $C(V_{\mathbb{C}})' = C(0) = \mathbb{C} {\bf 1}$. 
\end{proof}  

Of course, this may be proved directly: if $a \in C(V_{\mathbb{C}})'$ is given, then choose $M \in \mathcal{F} (V)$ so that $a \in C(M_{\mathbb{C}})$; expand $a$ in terms of an orthonormal basis for $M$ and invoke $v a = \gamma(a) v$ for each basis vector $v$.  

\section{Second Proof: Tensor Products}

For the setting of our second proof, we assume an orthogonal direct sum decomposition 
\[ V_{\mathbb{C}} = X \oplus Y
\]
into complex subspaces $X$ and $Y$ of $V_{\mathbb{C}}$. Thus: not only do we assume that $Y = X^{\perp}$ and $X = Y^{\perp}$; we also assume that $X \cap Y = 0$. Note that if $W \leqslant V_{\mathbb{C}}$ is an arbitrary subspace then it need not be the case that $W$ and $W^{\perp}$ are complementary: on the one hand, $W \cap W^{\perp}$ can be nonzero; on the other hand, even when $W \cap W^{\perp} = 0$ it need not be the case that $W + W^{\perp} = V_{\mathbb{C}}$. Of course, this means that we shall actually offer here a proof of a somewhat weaker result than Theorem 0.1: namely, that 
\[ C(X)' = C(Y).
\]

\medbreak 

We begin by recalling the super tensor product $\st$ in this Clifford algebra context. The super tensor product $C(X) \st C(Y)$ is the superalgebra with the ordinary tensor product $C(X) \otimes C(Y)$ as underlying vector space but with multiplication given on homogeneous elementary tensors by the Koszul-Quillen rule 
\[ (a_1 \st b_1) (a_2 \st b_2) = (-1)^{\partial(b_1) \partial(a_2)}(a_1 a_2) \st (b_1 b_2)
\]
where $a_1, a_2 \in C(X)$ and $b_1, b_2 \in C(Y)$ and where the degree $\partial$ is $0$ on even elements and $1$ on odd elements; in particular, if also $y_1 \in Y$ then $\partial(y_1) = 1$ so that 
\[ (a_1 \st y_1) (a_2 \st b_2) = (a_1 \gamma(a_2)) \st (y_1 b_2). 
\]. 

Now, consider the complex-linear map from $V_{\mathbb{C}} = X \oplus Y$ to $C(X) \st C(Y)$ given by 
\[ \phi: X \oplus Y \rightarrow C(X) \st C(Y) : x \oplus y \mapsto x \st {\bf 1} + {\bf 1} \st y.
\]
In view of  the Koszul-Quillen rule, $ ({\bf 1} \st y) (x \st {\bf 1}) = \gamma(x) \st y = - x \st y = - (x \st {\bf 1}) ({\bf 1} \st y)$; thus cross-terms cancel  when $\phi(x \oplus y)$ is squared and so $\phi$ satisfies the Clifford relation: 
\[ \phi(x \oplus y)^2 = (x^2) \st {\bf 1} + {\bf 1} \st (y^2) = ((x | x) + (y | y)) {\bf 1} = (x \oplus y | x \oplus y) {\bf 1}.
\]
The UMP extends $\phi$ to a superalgebra homomorphism 
\[ \Phi:  C(V_{\mathbb{C}}) \rightarrow C(X) \st C(Y) 
\]
that is actually an isomorphism: injective because the superalgebra $ C(V_{\mathbb{C}})$ is simple; surjective because it restricts to $C(X) \leqslant  C(V_{\mathbb{C}})$ as $a \mapsto a \st {\bf 1}$ and to $C(Y) \leqslant  C(V_{\mathbb{C}})$ as $b \mapsto {\bf 1} \st b$. We shall feel free to identify $C(V_{\mathbb{C}})$ with $C(X) \st C(Y)$ via this canonical isomorphism. Note that the grading automorphism $\gamma$ of $C(V_{\mathbb{C}}) \equiv C(X) \st C(Y)$  maps $a \st b$ to $\gamma(a) \st \gamma(b)$. 

\medbreak

In these terms, our version of Theorem 0.1 is the following claim: 
\[ (C(X) \st \mathbb{C} {\bf 1})' = \mathbb{C} {\bf 1} \st C(Y)
\]
which we now justify as follows. Let $c \in (C(X) \st \mathbb{C} {\bf 1})'$. As an element of the full tensor product, $c$ has a decomposition 
\[ c = \sum_{n = 1}^N a_n \st b_n
\]
with $\{ a_1, \dots , a_N \} \subseteq C(X)$ and with linearly independent $\{ b_1, \dots , b_N \} \subseteq C(Y)$. Membership of $c$ in the supercommutant $(C(X) \st \mathbb{C} {\bf 1})'$ is equivalent to each of the following for all $x \in X$: 
\[ (x \st {\bf 1}) c = \gamma(c) (x \st {\bf 1})
\]
\[ \sum_{n = 1}^N  (x \st {\bf 1}) (a_n \st b_n) = \sum_{n = 1}^N (\gamma(a_n) \st \gamma(b_n))(x \st {\bf 1})
\]
\[ \sum_{n = 1}^N  x a_n \st b_n = \sum_{n = 1}^N \gamma(a_n) x \st b_n
\]
using the Koszul-Quillen rule at the last step. As the vectors $\{ b_1, \dots , b_N \}$ are linearly independent, we deduce that $c \in (C(X) \st \mathbb{C} {\bf 1})'$ is equivalent to 
\[ (\forall n \in \{ 1, \dots , N \}) (\forall x \in X) \; \; x a_n = \gamma(a_n) x
\]
hence (see Theorem 1.4) to 
\[  (\forall n \in \{ 1, \dots , N \}) \; \; a_n = \alpha_n {\bf 1} \in \mathbb{C} {\bf 1}
\]
whence 
\[ c = \sum_{n = 1}^N \alpha_n {\bf 1} \st b_n =  \sum_{n = 1}^N {\bf 1} \st \alpha_n b_n \in \mathbb{C} {\bf 1} \st C(Y)
\]
as claimed. 

\medbreak 

Observe that once again, our proof actually works for nonsingular symmetric bilinear forms over arbitrary fields of characteristic other than two. 

\medbreak 

We should perhaps close this Section by expanding upon comments we made at the opening. Let $W \leqslant V_{\mathbb{C}}$ be a subspace. On the one hand, $W \cap W^{\perp}$ might be nonzero: for example, if $J : V \rightarrow V$ is an orthogonal transformation with square $- {\rm Id}$ then $W : = \{ v - i J v | v \in V \}$ satisfies $W^{\perp} = W$; in this example, the real dimension of $V$ is other than odd. On the other hand, even when $W \cap W^{\perp}$ is zero, the sum $W + W^{\perp}$ might fall short of $V_{\mathbb{C}}$: for example, $V$ might be a real Hilbert space and $W = Z_{\mathbb{C}}$ for some subspace $Z \leqslant V$ that is not closed; in this case, the dimension of $W$ is infinite. 

\section{Third Proof: Conditional Expectations}

For the setting of our third proof, we assume an orthogonal direct sum decomposition 
\[ V = X \oplus Y
\]
of $V$ itself. The idea is to construct a conditional expectation 
\[ \E_X : C(V_{\mathbb{C}}) \rightarrow C(Y_{\mathbb{C}}) 
\]
that acts on $C(X_{\mathbb{C}})'$ as the identity, and thereby to establish 
\[ C(X_{\mathbb{C}})' = C(Y_{\mathbb{C}}). 
\] 
Once again, we actually prove not Theorem 0.1 but a weaker variant. For convenience, we work with the real Clifford algebras and leave complexification for the reader: thus, we construct 
\[ \E_X : C(V) \rightarrow C(Y) 
\]
and use it to establish 
\[ C(X)' = C(Y). 
\]
As some of the proofs involve arguments that are closely similar to those already detailed in the preceding sections, we shall feel free to lighten our account. 

\medbreak 

To begin, let $u \in V$ be a unit vector. The direct sum decomposition 
\[C(V) = C(u^{\perp}) \oplus u C(u^{\perp}) 
\]
may be established as for Theorem 1.1 but more simply; in this case, the decomposition is orthogonal relative to the inner product $ \langle \cdot | \cdot\rangle_{\tau}$ on $C(V_{\mathbb{C}})  \supseteq C(V)$. By direct computation,
\[ C(u^{\perp}) = \{ a \in C(V) : u a u = \gamma(a) \} 
\]
and 
\[ u C(u^{\perp}) = \{ a \in C(V) : u a u = - \gamma(a) \}. 
\]
The $\langle \cdot | \cdot\rangle_{\tau}$-orthogonal projector of $C(V)$ on $C(u^{\perp})$ along $u C(u^{\perp})$ is thus given by 
\[ P_u : C(V) \ra C(V) : a \mapsto \frac{1}{2}(a + u \gamma(a) u).
\]

\medbreak 

Further, if $\{ u_1, \dots , u_m \}$ is an orthonormal set in $V$ with span $M \in \mathcal{F} (V)$ then the projectors $P_{u_1} , \dots , P_{u_m} $ commute; their product is the orthogonal projector on $C(u_1^{\perp}) \cap \cdots \cap C(u_m^{\perp}) = C(M^{\perp})$. We write this operator as 
\[ \E_M = P_{u_m} \circ \cdots \circ P_{u_1} : C(V) \ra C(M^{\perp}). 
\]
In fact, the orthogonal projector $\E_M$ is a conditional expectation. 

\begin{theorem} 
If $a \in C(V)$ and $b, c \in C(M)'$ then 
\[ \E_M (b a c) = b \E_M (a) c. 
\]
\end{theorem}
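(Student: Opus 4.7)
My plan is to reduce the statement to the single-vector case, where it becomes a short computation with the explicit formula $P_u(a) = \tfrac{1}{2}(a + u\gamma(a)u)$. Since $\mathbb{E}_M = P_{u_m}\circ \cdots \circ P_{u_1}$ and each $u_i$ lies in $M$, we have $C(u_i) \subseteq C(M)$ and hence $C(M)' \subseteq C(u_i)'$. Thus if the corresponding identity $P_u(bac) = b\,P_u(a)\,c$ holds for each unit vector $u \in V$ whenever $b,c \in C(u)'$, then iterating over $u_1, \dots, u_m$ yields $\mathbb{E}_M(bac) = b\,\mathbb{E}_M(a)\,c$.

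For the single-projector case, I would first split $b$ and $c$ into their even and odd parts and invoke linearity of $P_u$ to reduce to the case of homogeneous $b, c \in C(u)'$. The defining relation of the supercommutant, applied at $w = u$, gives $ub = \gamma(b)u$ and $uc = \gamma(c)u$; applying $\gamma$ (or replacing $b$ by $\gamma(b)$) yields the twin identities
\[
u\gamma(b) = bu, \qquad \gamma(c)u = uc.
\]
These are exactly what is needed to push both copies of $u$ past $\gamma(b)$ on the left and past $\gamma(c)$ on the right in the expression $u\gamma(bac)u$, giving
\[
u\gamma(bac)u \;=\; u\gamma(b)\gamma(a)\gamma(c)u \;=\; b\bigl(u\gamma(a)u\bigr)c.
\]
Averaging with $bac$ then produces $P_u(bac) = b\,P_u(a)\,c$, as desired.

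The only real obstacle is to avoid sign errors while moving $u$ across $\gamma(b)$ and $\gamma(c)$; the supercommutation relations above package exactly the right signs so that the two $u$'s reunite around $\gamma(a)$ with no leftover factor of $\gamma$ on $b$ or $c$. With the single-vector identity in hand, the theorem follows by induction on $m$, applying the identity one projector at a time and using at each step that $b, c$ lie in $C(u_i)'$.
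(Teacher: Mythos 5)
Your proposal is correct and follows essentially the same route as the paper: reduce to a single projector $P_u$ with $u \in M$ a unit vector, use the supercommutant relations $u\gamma(b) = bu$ and $\gamma(c)u = uc$ to obtain $u\gamma(bac)u = b\,u\gamma(a)u\,c$, and average with $bac$. The only extra (and harmless) step is your splitting of $b, c$ into homogeneous parts, which is not needed since the relation $ub = \gamma(b)u$ holds for every element of the supercommutant by linearity.
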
 

\begin{proof} 
It will be enough to see that if $u \in M$ is a unit vector then $P_u$ has this property. To see this, note that $u \gamma(b) = bu$ and $\gamma(c) u = u c$ so that $u \gamma(b a c) u = b u \gamma(a) u c$ and therefore 
\[ 2 P_u (bac) = bac + u \gamma(b a c) u = bac + b u \gamma(a) u c = 2 b P_u (a) c
\]
as required. 
\end{proof}

Incidentally, notice that we have just established $C(M)' = C(M^{\perp})$. 

\medbreak 

We remark that $\E_M$ is $^*$-preserving and indeed positive: again we need only check $P_u$ and note that if $a \in C(V)$ then 
\[ 2 P_u (a^* a) = a^* a + (\gamma(a) u)^*  (\gamma(a) u)
\]
whence $P_u(a^* a)$ is a convex combination of terms $b^* b$ for $b \in C(V)$; so the same is true of $\E_M (a^* a)$. 

\medbreak 

Having thus dealt with finite-dimensional subspaces we consider the orthogonal decomposition  
\[ V = X \oplus Y
\]
with which we started this section. As we shall see, the net $( \E_M | M \in \mathcal{F} (X) )$ of conditional expectations indexed by the directed set of finite-dimensional subspaces of $X$ converges pointwise; its limit will be the conditional expectation $\E_X$. 

\medbreak 

Let $a \in C(V)$ and choose $N \in \mathcal{F} (V)$ so that $a \in C(N)$. Let the $( \cdot | \cdot )$-orthogonal projections of $N$ on $X$ and $Y$ be $X_N \in \mathcal{F} (X)$ and $Y_N \in \mathcal{F} (Y)$ respectively. From $a \in C(N) \subseteq  C(X_N \oplus Y_N)$ it follows that 
\[ \E_{X_N} (a) \in C((X_N \oplus Y_N) \cap (X_N)^{\perp}) = C(Y_N) \subseteq C(Y) \subseteq C(X)'. 
\] 
Consequently, if $u \in X$ is any unit vector then $u \gamma(\E_{X_N} (a)) u  = \E_{X_N} (a)$ and therefore 
\[ P_u \E_{X_N} (a) = \frac{1}{2} (\E_{X_N} (a) + u \gamma(\E_{X_N} (a)) u) = \E_{X_N} (a).
\] 
We may now see that the net $( \E_M (a)| M \in \mathcal{F} (X) )$ stabilizes, as follows. 

\begin{theorem} 
Let $a \in C(V)$ and choose $N \in \mathcal{F} (V)$ so that $a \in C(N)$. If $M \in \mathcal{F} (X)$ contains $X_N$ then 
\[ \E_M (a) = \E_{X_N} (a). 
\]
\end{theorem}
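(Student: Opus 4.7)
The plan is to reduce $\mathbb{E}_M$ to a composition in which $\mathbb{E}_{X_N}$ is the inner factor, and then observe that every remaining projector acts as the identity on the output of $\mathbb{E}_{X_N}$.

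First I would fix an orthonormal basis $\{u_1,\dots,u_k\}$ of $X_N$ and extend it to an orthonormal basis $\{u_1,\dots,u_k,u_{k+1},\dots,u_m\}$ of $M$ (so that $\{u_{k+1},\dots,u_m\}$ is an orthonormal basis for the $(\cdot|\cdot)$-orthogonal complement $M'$ of $X_N$ in $M$). Since the $P_{u_i}$ pairwise commute for $u_i$ in an orthonormal set, I may arrange the composition defining $\mathbb{E}_M$ in the order
\[
\mathbb{E}_M \;=\; (P_{u_m}\circ\cdots\circ P_{u_{k+1}})\circ (P_{u_k}\circ\cdots\circ P_{u_1}) \;=\; \mathbb{E}_{M'}\circ\mathbb{E}_{X_N}.
\]

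Applied to $a$, this gives $\mathbb{E}_M(a)=\mathbb{E}_{M'}(\mathbb{E}_{X_N}(a))$. The remark immediately preceding the theorem statement already delivers the crucial fact: $\mathbb{E}_{X_N}(a) \in C(Y_N)\subseteq C(Y)\subseteq C(X)'$, so for any unit vector $u\in X$ we have $u\,\gamma(\mathbb{E}_{X_N}(a))\,u = \mathbb{E}_{X_N}(a)$ and consequently $P_u(\mathbb{E}_{X_N}(a))=\mathbb{E}_{X_N}(a)$. Applying this successively for $u = u_{k+1},\dots,u_m$ (each a unit vector in $X$) yields $\mathbb{E}_{M'}(\mathbb{E}_{X_N}(a))=\mathbb{E}_{X_N}(a)$, which is the desired identity.

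There is no real obstacle here; the only mild subtlety is making sure the commutation of the $P_{u_i}$'s allows the rearrangement into the factored form $\mathbb{E}_{M'}\circ\mathbb{E}_{X_N}$, but this is guaranteed by the fact that $\{u_1,\dots,u_m\}$ is orthonormal. Everything else reduces to the already established identity $P_u c = c$ for $c\in C(X)'$ and unit $u\in X$.
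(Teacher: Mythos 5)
Your proof is correct and follows essentially the same route as the paper: the paper's one-line argument is precisely to apply the projectors $P_u$, for $u$ running over an orthonormal basis of $M \cap (X_N)^{\perp}$, to $\E_{X_N}(a)$, using the previously displayed identity $P_u\E_{X_N}(a)=\E_{X_N}(a)$. Your factorization $\E_M = \E_{M'}\circ\E_{X_N}$ just spells out this same step in more detail.
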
 

\begin{proof} 
All we need do is refer to the equation displayed prior to the theorem and take the product of the projectors $P_u$ as $u$ runs over an orthonormal basis for $M \cap (X_N)^{\perp}$. 
\end{proof} 

It follows that we may pass to the limit and define a (plainly linear) map
\[  \E_X : C(V) \ra C(Y)
\]
by the rule that if $a \in C(V)$ then 
\[ \E_X (a) = \E_{X_N} (a) 
\]
where $X_N$ is the orthogonal projection on $X$ of any $N \in \mathcal{F} (V)$ such that $a \in C(N)$. The map $\E_X$ pointwise fixes $C(X)'$: if $a \in C(X)'$ and if $u \in X$ is a unit vector then $u \gamma(a) u = a$ so that $P_u (a) = a$; now let $u$ run over an orthonormal basis for $X_N$ in the notation established for Theorem 3.2.

\medbreak 

At this point, note that we have already established the equality
\[ C(X)' = C(Y). 
\]
Explicitly, the Clifford relations again imply $C(Y) \subseteq C(X)'$ while $C(X)' \subseteq C(Y)$ follows at once from the fact that $\E_X : C(V) \ra C(Y)$ fixes $C(X)'$ pointwise. 

\medbreak 

Having come this far, we ought to record some properties of $\E_X$ that follow immediately from its construction as the limit of $\E_M$ as $M$ runs over  $\mathcal{F} (X)$. As $\E_X$ fixes $C(X)'$ pointwise, it is an idempotent. The map $\E_X$ is $^*$-preserving and indeed positive: again, if $a \in C(V)$ then $\E_X (a^* a)$ is a convex combination of terms $b^* b$ for $b \in C(V)$. Also, $\E_X$ has the conditional expectation property: if $a \in C(V)$ and $b, c \in C(X)'$ then 
\[ \E_X (b a c) = b \E_X (a) c
\]
as may be seen by choosing $N \in \mathcal{F} (V)$ so large that $a, b, c \in C(N)$ and passing to $X_N$ in the notation for Theorem 3.2. 

\medbreak 

There is much more to say concerning these conditional expectations; having presented enough to fashion yet another proof of `twisted duality' as was our intention, we shall postpone further discussion to a future article. 

\medbreak 
\noindent 
\begin{center}
REFERENCES
\end{center}
\medbreak 
\noindent
[1] H. Baumgartel, M. Jurke and F. Lledo, {\it Twisted duality of the CAR-Algebra}, J. Math. Phys. {\bf 43 (8)} (2002) 4158-4179. 
\medbreak 
\noindent 
[2] S. Doplicher, R. Haag and J. Roberts, {\it Fields, observables and gauge transformations. I}, Comm. Math. Phys. {\bf 13} (1969) 1-23. 
\medbreak 
\noindent
[3] J. J. Foit, {\it Abstract Twisted Duality for Quantum Free Fermi Fields}, Publ. RIMS, Kyoto Univ. {\bf 19} (1983) 729-741. 
\medbreak 
\noindent 
[4] R. J. Plymen and P. L. Robinson, {\it Spinors in Hilbert Space}, Cambridge Tracts in Mathematics {\bf 114} (1994). 
\medbreak 
\noindent 
[5] S.J. Summers, {\it Normal Product States for Fermions and Twisted Duality for CCR- and CAR-Type Algebras with Application to the Yukawa$_2$ Quantum Field Model}, Comm. Math. Phys. {\bf 86} (1982) 111-141. 

\end{document}